\documentclass[final,3p,times]{article}

\usepackage{lineno,hyperref}
\modulolinenumbers[1]

\usepackage{graphicx}
\usepackage{amssymb}
\usepackage{mathtools}
\usepackage{amsthm}
\usepackage{amsmath}
\usepackage{dsfont}
\usepackage{epsfig}
\usepackage{float}
\usepackage{epstopdf}
\usepackage{subfigure}
\usepackage{cite}
\usepackage{xcolor}
\usepackage{latexsym,amssymb}
\usepackage{pgfplots}
\usepackage{amsmath}
\usepackage{authblk}

\numberwithin{equation}{section}

\theoremstyle{definition}

\newtheorem{theorem}{Theorem}[section]
\newtheorem{lemma}[theorem]{Lemma}

\newtheorem{corollary}[theorem]{Corollary}
\newtheorem{definition}[theorem]{Definition}
\newtheorem{remark}[theorem]{Remark}

\theoremstyle{remark}

\usetikzlibrary{plotmarks}
\usetikzlibrary{external}
\pgfplotsset{compat=1.3}
\newlength\figurewidth 
\newlength\figureheight
\tikzset{external/force remake=true}

\newmuskip\pFqmuskip

\newcommand*\pFq[6][8]{%
	\begingroup 
	\pFqmuskip=#1mu\relax
	\mathchardef\normalcomma=\mathcode`,
	\mathcode`\,=\string"8000
	\begingroup\lccode`\~=`\,
	\lowercase{\endgroup\let~}\pFqcomma
	{}_{#2}F_{#3}{\left(\genfrac..{0pt}{}{#4}{#5}\bigg|#6\right)}%
	\endgroup
}
\newcommand{\pFqcomma}{{\normalcomma}\mskip\pFqmuskip}

\usepackage[margin=0.87in]{geometry}

\date{}

\begin{document}
	
	
	
	\title{Terminating Zero-Balanced Hypergeometric Series Using Divided Differences}

	\author[a]{Fatma Z\"{u}rnac{\i}-Yeti\c{s}  \footnote{\textbf{Email addresses:} $^a$fzurnaci@itu.edu.tr }}

	\affil[a]{Department of Mathematics Engineering, Istanbul Technical University,  Maslak, Istanbul, 34469, Turkiye}

	\setcounter{Maxaffil}{0}
	\renewcommand\Affilfont{\small}

	\maketitle
	
	\begin{abstract}
	There is a close relationship between divided differences and hypergeometric series, and recent studies have shown that divided differences can be used effectively to derive terminating hypergeometric identities.  In this paper, we apply this 
	approach to terminating zero-balanced hypergeometric series. Using 
	explicit product evaluations arising from divided differences, we give
	 alternative proofs of  zero-balanced ${}_3F_2(1)$ and 
	${}_4F_3(1)$ summation formulas and then  extend the argument to the general terminating ${}_{r+1}F_r(1)$ case. We also combine the Lagrange representation with the Leibniz rule 
	for divided differences to derive a finite convolution transformation 
	for a terminating ${}_{r+2}F_{r+1}(1)$ series. Thus, the divided-difference approach not only reproduces the known zero-balanced summation formulas but also yields a finite convolution transformation for terminating hypergeometric series, together with its zero-balanced specialization.

	\end{abstract}
	{\bf Keywords}  Divided differences, terminating hypergeometric series, 
	zero-balanced series, hypergeometric transformations.
	\\
	{\bf MSC2020 Classification:}  41A10,  33C05
	\section{Introduction}\label{sec1}
	
	There is a well-known connection between divided differences and 
	hypergeometric series. Divided differences play a fundamental role in 
	numerical analysis and approximation theory and are closely related to 
	Newton interpolation and B-spline approximation \cite{goldman}. 
	Hypergeometric series, on the other hand, arise in many areas of 
	mathematics and mathematical physics, including number theory, 
	combinatorics, special functions, and orthogonal polynomials
	~\cite{andrew,gasper,bailey}. Although these two subjects arise in different mathematical contexts, recent studies have shown that divided differences can be used effectively in the study of hypergeometric and basic hypergeometric identities.

	In the past decade, several authors have explored hypergeometric series identities from different perspectives \cite{chu1,chu2,fatma,fatma2,tuncer}.  In particular, Chu developed an approach based on divided differences for terminating well-poised hypergeometric and basic hypergeometric series 
	and obtained several classical summation and transformation formulas in this way
	~\cite{chu1,chu2}. These results suggest that divided differences are useful not only as 
	an interpolation device but also as a powerful tool for deriving terminating hypergeometric summation and transformation formulas.

	The present paper applies this point of view to terminating 
	zero-balanced hypergeometric series. In~\cite{zero}, several 
	zero-balanced terminating hypergeometric identities were considered, 
	including a general family whose validity was supported by numerical 
	computations for selected parameter values. Here we give direct divided-difference proofs of the relevant identities. We first obtain 
	divided-difference representations for terminating ${}_3F_2(1)$ and 
	${}_4F_3(1)$ series and use them to give alternative proofs of the 
	corresponding zero-balanced summation formulas. The same construction 
	is then extended to a general terminating ${}_{r+1}F_r(1)$ series, 
	yielding the general zero-balanced summation formula by a direct 
	degree argument.
	
	In addition to these summation results, we use the divided-difference 
	approach to derive a finite convolution transformation. More precisely, 
	a suitable product is evaluated first through the Lagrange representation 
	of divided differences and then through the Leibniz rule. Equating the 
	two representations produces a transformation from a terminating 
	${}_{r+2}F_{r+1}(1)$ series to a finite sum of terminating 
	${}_{r+1}F_r(1)$ series. We then show that this transformation has a zero-balanced specialization.
Thus, the same divided-difference approach provides summation formulas for terminating zero-balanced hypergeometric series and a finite transformation formula together with its zero-balanced specialization.

	This paper is organized as follows. In Section \ref{sec2}, some basic concepts are briefly reviewed. In Section \ref{subsec1}, the standard definition and properties of divided differences are given. In Section   \ref{subsec2}, the notation and a few simple identities for shifted factorials are established. In Section   \ref{subsec3},  the definition of generalized hypergeometric series and the zero-balance condition are recalled. In Section \ref{sec3}, we first derive divided-difference representations leading to the 
	zero-balanced ${}_3F_2(1)$ and ${}_4F_3(1)$ summations, and then extend the construction to the general terminating ${}_{r+1}F_r(1)$ case. Finally, a finite convolution transformation for 
	${}_{r+2}F_{r+1}(1)$ is obtained, together with its zero-balanced specialization.
	
	\section{Preliminaries}\label{sec2}
	
	\subsection{Divided differences}\label{subsec1}
	The standard definitions and properties of the divided differences will be given throughout this section.
	\begin{definition} Let     $f$  be an arbitrary function and $a< x_{0}\leq \ldots \leq x_{n} <b$   be a set of $n+1$  nodes. The divided difference of $f$  at these nodes is defined recursively as follows 
		\begin{align*}
			f[x_{0}]=&f(x_{0})\\
			f[x_{0},x_{1}]=&\frac{f(x_{1}) - f(x_{0})}{x_{1}-x_{0}} \quad  \quad &x_{1} \neq x_{0}\\=&f'(x_{0}) \quad  \quad \quad  \quad \quad   &x_{1} = x_{0}\\
			&\vdots \quad  \quad \quad  \quad \quad   \quad &\vdots \quad \quad   \\
			f[x_{0},\ldots,x_{n}]=&\frac{f[x_{1},\ldots,x_{n}] - f[x_{0},\ldots,x_{n-1}]}{x_{n}-x_{0}} \quad  \quad &x_{n} \neq x_{0}\\=& \frac{f^{(n)}(x_{0})}{n!}\quad  \quad \quad  \quad \quad \,\, \,\,  & x_{n} = x_{0}.
		\end{align*}
	\end{definition}
	There are numerous properties and identities of divided differences. We list some important properties of the divided difference below:\\
	\textit{Linearity:}
	\begin{align*}
		&(cf+g)[x_{0},x_{1},\ldots,x_{n}]= c(f[x_{0},x_{1},\ldots,x_{n}])+g[x_{0},x_{1},\ldots,x_{n}],\, c\in \mathbb{R}.
	\end{align*} 
	\textit{Lagrange coefficients:} For distinct nodes ${{x}_{0}},{{x}_{1}},\ldots ,{{x}_{n}},$  
	\begin{equation}\label{prop2}
		f[x_{0},x_{1},\ldots,x_{n}]=\sum_{k=0}^{n}\frac{f(x_{k})}{\prod_{  \substack{ j=0 \\ j\neq k}     }^{n} (x_{k}-x_{j})}. 
	\end{equation}
	\textit{Divided differences of polynomials: }Let $p(x)$ be a monic polynomial such that $\deg (p(x))=m$ and $m\le n,$ then 
	$$p[{{x}_{0}},{{x}_{1}},\ldots ,{{x}_{n}}]=\delta_{m,n},$$
	where $\delta_{m,n} $  is the Kronecker delta defined by $\delta_{m,n}= \left\{
	\begin{array}{ll}
		1, & m=n,  \\
		0, & m\neq n \\
	\end{array} 
	\right.$.\\
	The divided difference notation $[{{x}_{0}},{{x}_{1}},\ldots ,{{x}_{n}}]f$ is used instead of $f[{{x}_{0}},{{x}_{1}},\ldots ,{{x}_{n}}]$ for the sake of clarity of the operations.

	\subsection{Shifted factorials}\label{subsec2}
	Throughout this section, we shall adopt the following standard definitions and notation for the shifted factorials and the multiple shifted factorials.\\
	\textit{Shifted Factorials}
	\begin{align*}
		(\alpha)_{0}=1, \quad(\alpha)_{n}=\prod_{i=0}^{n-1}(\alpha+i) \quad n=1,2,\ldots.
	\end{align*}
	\textit{ Multiple Shifted Factorials}
	\begin{align*}
		(\alpha_{1},\alpha_{2},\ldots,\alpha_{m})_{n}=(\alpha_{1})_{n}(\alpha_{2})_{n}\cdots(\alpha_{m})_{n}.
	\end{align*}
	The following standard identities for shifted factorials will be used throughout the paper. For $\alpha\in\mathbb{C}$ and appropriate nonnegative integers $p,q,N$, we have
		\begin{align} \label{shiffac1}
		&(\alpha-N)_N=(-1)^N(1-\alpha)_N, \\
		& (\alpha)_{p+q}=(\alpha)_p(\alpha+p)_q \label{shiffac2}\\
		&	(\alpha)_{N-j}
		=
		\frac{(-1)^j(\alpha)_N}
		{(-\alpha-N+1)_j},
		\qquad 0\leq j\leq N,  \label{shiffac3}\\       &	(-N)_j=(-1)^j\frac{N!}{(N-j)!},
		\qquad 0\leq j\leq N. \label{shiffac4}
	\end{align}
	\subsection{Hypergeometric series} \label{subsec3}
	The $_{r}F_{s}$ hypergeometric series is defined by:
	\begin{align}\label{hyper0}
		\pFq[3.5]{r}{s}{a_{1},\ldots, a_{r}}{b_{1},\ldots, b_{s}}{z} = \sum\limits_{k=0}^{\infty} \frac{(a_{1},\ldots, a_{r})_{k}}{(b_{1},\ldots, b_{s})_{k}}\frac{z^{k}}{k!}.
	\end{align}
	$_{r}F_{s}$ is called a zero-balanced hypergeometric series in case $$\sum_{j=1}^{s}b_{j}-\sum_{i=1}^{r}a_{i}=0.$$
	If $a_{j}=-n$ for some  $n\in \mathbb{N}$, then $(a_{j})_{k}=(-n)_{k}=0$ for all $k \geq n+1$. Therefore, in this case, the right-hand side of (\ref{hyper0}) reduces to a finite sum:
	\begin{align}\label{hyper}
		\pFq[3.5]{r}{s}{-n, a_{2},\ldots, a_{r}}{b_{1}, b_{2},\ldots , b_{s}}{z} = \sum\limits_{k=0}^{n} \frac{(-n, a_{2},\ldots, a_{r})_{k}}{(b_{1},\ldots, b_{s})_{k}}\frac{z^{k}}{k!}.
	\end{align}
	
	
	\section{Hypergeometric summation and transformation formulas}\label{sec3}
	Using the sequences defined by
	\begin{align*}
		{{X}_{k}}=a+k,\quad {{B}_{i}}=a-b+i,\quad {{D}_{j}}=a-d+j,\quad {{S}_{t}}=a-s+t,   
	\end{align*}
	where $k=0,1,\ldots ,m,$ $i=1,\ldots ,n$, $j=1,\ldots ,l$, and $t=1,\ldots ,\gamma$, the following products are calculated: 
	\begin{align}\label{prod1}
		\prod_{i=0\atop i\neq k}^{m}\frac{1}{X_{k}-X_{i}}&=\prod_{i=0\atop i\neq k}^{m}\frac{1}{k-i}=\frac{(-m)_k}{k!}\frac{(-1)^m}{m!}, \\\label{prod2}
		\prod_{i=1}^{n}{(X_{k}-B_{i})}&=\prod_{i=1}^{n}(b+k-i)=\frac{(-1)^n(1-b)_{n}(b)_{k}}{(b-n)_{k}}, \\\label{prod3}
		\prod_{j=1}^{l}{(X_{k}-D_{j})}&=\prod_{j=1}^{l}(d+k-j)=\frac{(-1)^l(1-d)_{l}(d)_{k}}{(d-l)_{k}},\\ \label{prod4}
		\prod_{t=1}^{\gamma}{(X_{k}-S_{t})}&=\prod_{t=1}^{\gamma}(s+k-t)=\frac{(-1)^\gamma(1-s)_{\gamma}(s)_{k}}{(s-\gamma)_{k}}.
	\end{align}
	These product evaluations form the basic algebraic ingredients of the 
	divided-difference approach used throughout this section. They allow 
	divided differences at the equally spaced nodes $X_0,\ldots,X_m$ to be 
	rewritten directly as terminating hypergeometric series. We begin with 
	the ${}_3F_2(1)$ case.
	\begin{lemma}\label{lemma1}
		For the sequences $X_{k}=a+k$, $B_{i}=a-b+i$ and $D_{j}=a-d+j$  where $k=0,\ldots,m$, $i=1,\ldots,n$, and $j=1,\ldots,l$, the following divided difference can be expressed in terms of a hypergeometric series: 
		\begin{align}\label{lemma3.1}
			&[X_0,X_1,\ldots,X_m]\prod_{i=1}^{n}{(y-B_{i})}\prod_{j=1}^{l}{(y-D_{j})}\\&=(-1)^{n+m+l}\frac{(1-b)_{n}(1-d)_{l}}{m!}\,\pFq[4]{3}{2}{-m,b,d}{b-n,d-l}{1}.
		\end{align}
	\end{lemma}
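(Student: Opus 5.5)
The plan is to apply the Lagrange representation \eqref{prop2} directly. The nodes $X_k=a+k$, $k=0,\ldots,m$, are distinct, so for $f(y)=\prod_{i=1}^{n}(y-B_i)\prod_{j=1}^{l}(y-D_j)$ we have
\begin{align*}
[X_0,\ldots,X_m]f=\sum_{k=0}^{m} f(X_k)\prod_{i=0\atop i\neq k}^{m}\frac{1}{X_k-X_i}.
\end{align*}
Each factor of each summand is then rewritten using the product evaluations already computed. By \eqref{prod1}, the denominator product equals $\frac{(-m)_k}{k!}\frac{(-1)^m}{m!}$. By \eqref{prod2} and \eqref{prod3}, $f(X_k)=\prod_{i}(X_k-B_i)\prod_j(X_k-D_j)$ equals
\[
\frac{(-1)^{n}(1-b)_n(b)_k}{(b-n)_k}\cdot\frac{(-1)^{l}(1-d)_l(d)_k}{(d-l)_k}.
\]

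Next, the factors independent of $k$, namely $(-1)^{n+m+l}(1-b)_n(1-d)_l/m!$, are pulled out of the sum. What remains is
\[
\sum_{k=0}^{m}\frac{(-m)_k(b)_k(d)_k}{(b-n)_k(d-l)_k}\frac{1}{k!},
\]
which by the terminating form \eqref{hyper} is exactly ${}_3F_2\!\left(-m,b,d;\,b-n,d-l;\,1\right)$. This gives \eqref{lemma3.1}.

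The only step needing care is verifying \eqref{prod1}--\eqref{prod3} if they are not taken for granted.

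For \eqref{prod1}, the product $\prod_{i\ne k}(k-i)$ splits as $k!\,(-1)^{m-k}(m-k)!$. Then \eqref{shiffac4} gives $(-m)_k=(-1)^k m!/(m-k)!$, which converts this into the stated form.

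For \eqref{prod2}, write $\prod_{i=1}^n(b+k-i)=(b+k-n)_n$. This equals $(b-n)_{n+k}/(b-n)_k$, which by \eqref{shiffac2} is $(b-n)_n(b)_k/(b-n)_k$. Finally \eqref{shiffac1} gives $(b-n)_n=(-1)^n(1-b)_n$.

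These identities require $(b-n)_k,(d-l)_k\neq0$ for $0\le k\le m$, i.e. generic $b,d$. The identity then extends to the remaining values by viewing both sides as polynomial identities after clearing denominators.
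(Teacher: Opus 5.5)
Your proposal is correct and matches the paper's proof: apply the Lagrange representation \eqref{prop2} at the distinct nodes $X_k=a+k$, substitute the closed forms \eqref{prod1}--\eqref{prod3}, pull out the $k$-independent factor $(-1)^{n+m+l}(1-b)_n(1-d)_l/m!$, and recognize the remaining sum as the terminating ${}_3F_2(1)$. Your check of \eqref{prod1} and \eqref{prod2} via \eqref{shiffac1}, \eqref{shiffac2}, \eqref{shiffac4} is a detail the paper takes for granted, and your closing extension remark is not needed: the right-hand side is defined exactly when $(b-n)_m(d-l)_m\neq 0$, which is the condition under which your direct computation already applies.
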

	
	\begin{proof} Using equation (\ref{prop2}), we have
		\begin{align*}
			&[X_0,X_1,\ldots,X_m]\prod_{i=1}^{n}{(y-B_{i})}\prod_{j=1}^{l}{(y-D_{j})}\\&=\sum_{k=0}^{m}\frac{\prod_{i=1}^{n}{(X_{k}-B_{i})}\prod_{j=1}^{l}{(X_{k}-D_{j})}}{\prod_{i\neq k}{(X_{k}-X_{i})}}
		\end{align*}
		
		The products in denominator and numerator are
		evaluated explicitly by (\ref{prod1})-(\ref{prod3}). Substituting these closed
		forms into the above sum and collecting the $k$-independent factors, we obtain
		\begin{align*}
			&[X_0,\ldots,X_m]
			\prod_{i=1}^{n}(y-B_i)\prod_{j=1}^{l}(y-D_j)\\&
			=
			\frac{(-1)^{n+m+l}(1-b)_n(1-d)_l}{m!}
			\sum_{k=0}^{m}
			\frac{(-m)_k(b)_k(d)_k}{k!(b-n)_k(d-l)_k}.
		\end{align*}
		Invoking the definition of hypergeometric series (\ref{hyper}) yields the result. 
	\end{proof}
	
	Lemma 3.1 gives the required hypergeometric representation of the 
	divided difference. When the degree of the polynomial in the divided 
	difference is matched with the order of the divided difference, the 
	representation immediately yields the corresponding zero-balanced 
	summation formula.
	\begin{theorem}[A zero-balanced hypergeometric $_{3}\uppercase{F}_{2}$ series \cite{zero,zero2}]
		\begin{align}\label{zerob}
			\pFq[4]{3}{2}{-m,b,d}{b-n,d-m+n}{1}=\frac{m!}{(1-b)_{n}(1-d)_{m-n}}.
		\end{align}  
		where $n,m\in {{\mathbb{N}}_{0}}$,  $m\ge n$,\, $b,d ,b-n,d-m+n\in \mathbb{C}\backslash \mathbb{Z}_{0}^{-}.$
	\end{theorem}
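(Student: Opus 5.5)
Apply Lemma~\ref{lemma1} with $l=m-n$, which is a nonnegative integer because $m\ge n$. The lower parameters are then $b-n$ and $d-l=d-m+n$, exactly those of (\ref{zerob}). The series is zero-balanced, since $(b-n)+(d-m+n)-(-m+b+d)=0$. Lemma~\ref{lemma1} gives
\begin{align*}
[X_0,X_1,\ldots,X_m]\prod_{i=1}^{n}(y-B_i)\prod_{j=1}^{m-n}(y-D_j)
=(-1)^{n+m+(m-n)}\frac{(1-b)_n(1-d)_{m-n}}{m!}\,
\pFq[4]{3}{2}{-m,b,d}{b-n,d-m+n}{1}.
\end{align*}
The sign is $(-1)^{2m}=1$.

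Next we evaluate the left-hand side directly. The polynomial
\[
p(y)=\prod_{i=1}^{n}(y-B_i)\prod_{j=1}^{m-n}(y-D_j)
\]
is monic of degree $n+(m-n)=m$. The nodes $X_k=a+k$, $k=0,\ldots,m$, are $m+1$ distinct points. By the property of divided differences of polynomials, $p[X_0,\ldots,X_m]=\delta_{m,m}=1$. Equating the two expressions and solving for the series gives
\[
\pFq[4]{3}{2}{-m,b,d}{b-n,d-m+n}{1}=\frac{m!}{(1-b)_n(1-d)_{m-n}},
\]
which is (\ref{zerob}).

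The main obstacle is justifying the division by $(1-b)_n(1-d)_{m-n}$ and the validity of the product formulas (\ref{prod2})--(\ref{prod3}) under the stated hypotheses.

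Formulas (\ref{prod2})--(\ref{prod3}) require $(b-n)_k\ne0$ and $(d-l)_k\ne 0$ for $0\le k\le m$. This is guaranteed because $b-n,\ d-m+n\notin\mathbb{Z}_0^-$.

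The factors $(1-b)_n$ and $(1-d)_{m-n}$ must also be nonzero. Vanishing of $(1-b)_n$ would mean $b\in\{1,\ldots,n\}$, which forces $b-n\in\mathbb{Z}_0^-$ and is excluded; the same argument applies to $d$. Hence the division is legitimate.

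Alternatively, one can first prove the identity for generic $b,d$, where all of these conditions hold trivially. Both sides, after clearing denominators, are rational in $b,d$, so the identity extends to all admissible parameters by analytic continuation.
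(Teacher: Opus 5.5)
Your proof is correct and is essentially the paper's argument: Lemma~\ref{lemma1} combined with the fact that the $m$th divided difference of a monic polynomial of degree $m$ equals $1$. The paper first records the general formula with $\delta_{m,n+l}$ and then sets $l=m-n$, whereas you substitute $l=m-n$ at the outset; your checks that $(1-b)_n(1-d)_{m-n}\neq 0$ and $(b-n)_k(d-m+n)_k\neq 0$ follow from $b-n,\,d-m+n\notin\mathbb{Z}_0^-$ and make explicit a nondegeneracy the paper leaves implicit.
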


	\begin{proof}
		Consider equation (\ref{lemma3.1}). The polynomial
		\[
		P(y):=\prod_{i=1}^{n}(y-B_i)\prod_{j=1}^{l}(y-D_j)
		\]
		is monic of degree $n+l$. Therefore, when  $0\leq n+l\leq m$, by the divided-difference property
		stated as highest-order coefficient of the polynomial interpolant,
		its $m$th divided difference satisfies
		\[
		[X_0,X_1,\ldots,X_m]\,P(y)=\delta_{m,n+l}.
		\]
		Thus,	using Lemma \ref{lemma1},  the following closed formula is obtained:
		\begin{align}\label{Pfaff0}
			\pFq[4]{3}{2}{-m,b,d}{b-n,d-l}{1}=\delta_{m,n+l}\frac{m!}{(1-b)_{n}(1-d)_{l}}.
		\end{align}		
		Setting $l=m-n$ yields $\delta_{m,n+l}=\delta_{m,m}=1$. Then using (\ref{Pfaff0}), we obtain equation (\ref{zerob}).  
	\end{proof}
	The same construction extends naturally by introducing one further 
	product block. This leads to a terminating ${}_4F_3(1)$ representation 
	and, under the corresponding degree condition, to the zero-balanced 
	${}_4F_3(1)$ summation.
	\begin{lemma} \label{lemma2} For the sequences $X_{k}=a+k$, $B_{i}=a-b+i$, $D_{j}=a-d+j$ and $S_{t}=a-s+t$  where $k=0,\ldots,m$, $i=1,\ldots,n$, $j=1,\ldots,l$ and $t=1,\ldots,\gamma$, the following divided difference can be expressed in terms of a hypergeometric series: 
		\begin{align*}
			&[X_0,X_1,\ldots,X_m]\prod_{i=1}^{n}{(y-B_{i})}\prod_{j=1}^{l}{(y-D_{j})}\prod_{t=1}^{\gamma}{(y-S_{t})}\\&=(-1)^{n+m+l+\gamma}\frac{(1-b)_{n}(1-d)_{l}(1-s)_{\gamma}}{m!}\,\pFq[4]{4}{3}{-m,b,d, s}{b-n,d-l, s-\gamma}{1}.
		\end{align*}
	\end{lemma}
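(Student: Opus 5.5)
The plan mirrors the proof of Lemma \ref{lemma1}, with one more product block. Since the nodes $X_k=a+k$, $k=0,\ldots,m$, are distinct, the Lagrange representation (\ref{prop2}) applies to the polynomial
\[
Q(y):=\prod_{i=1}^{n}(y-B_i)\prod_{j=1}^{l}(y-D_j)\prod_{t=1}^{\gamma}(y-S_t).
\]
It gives
\[
[X_0,\ldots,X_m]\,Q(y)=\sum_{k=0}^{m}\frac{\prod_{i=1}^{n}(X_k-B_i)\prod_{j=1}^{l}(X_k-D_j)\prod_{t=1}^{\gamma}(X_k-S_t)}{\prod_{i\neq k}(X_k-X_i)}.
\]

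Next I substitute the closed forms (\ref{prod1})--(\ref{prod4}). The reciprocal denominator becomes $\frac{(-m)_k}{k!}\frac{(-1)^m}{m!}$. The three numerator blocks become
\[
\frac{(-1)^n(1-b)_n(b)_k}{(b-n)_k},\qquad \frac{(-1)^l(1-d)_l(d)_k}{(d-l)_k},\qquad \frac{(-1)^\gamma(1-s)_\gamma(s)_k}{(s-\gamma)_k}.
\]
The $k$-independent factors collect to $(-1)^{n+m+l+\gamma}(1-b)_n(1-d)_l(1-s)_\gamma/m!$. What remains inside the sum is
\[
\sum_{k=0}^{m}\frac{(-m)_k(b)_k(d)_k(s)_k}{k!\,(b-n)_k(d-l)_k(s-\gamma)_k}.
\]
By (\ref{hyper}) with $z=1$, this is exactly the terminating ${}_4F_3(1)$ in the statement.

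The only point needing care is checking (\ref{prod4}), in the same way as (\ref{prod2}) and (\ref{prod3}). First, $X_k-S_t=s+k-t$. Reversing the product over $t$ gives
\[
\prod_{t=1}^{\gamma}(s+k-t)=(s+k-\gamma)_\gamma .
\]
By (\ref{shiffac2}), $(s-\gamma)_{k+\gamma}=(s-\gamma)_\gamma(s)_k=(s-\gamma)_k(s+k-\gamma)_\gamma$, so
\[
(s+k-\gamma)_\gamma=\frac{(s-\gamma)_\gamma(s)_k}{(s-\gamma)_k}.
\]
Finally (\ref{shiffac1}) gives $(s-\gamma)_\gamma=(-1)^\gamma(1-s)_\gamma$.

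I also need the lower parameters to avoid nonpositive integers, so that the hypergeometric terms are defined. I will impose $b-n,\,d-l,\,s-\gamma\notin\mathbb{Z}_0^-$, as in the preceding theorem. Beyond this bookkeeping there is no real obstacle; the step most prone to error is tracking the signs $(-1)^{m}$, $(-1)^n$, $(-1)^l$, $(-1)^\gamma$ when collecting constants.
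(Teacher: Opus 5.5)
Your proposal is correct and follows the paper's proof: apply the Lagrange representation \eqref{prop2}, substitute the closed forms \eqref{prod1}--\eqref{prod4}, collect the $k$-independent constant $(-1)^{n+m+l+\gamma}(1-b)_n(1-d)_l(1-s)_\gamma/m!$, and recognize the remaining sum as the terminating ${}_4F_3(1)$. Your explicit check of \eqref{prod4} via \eqref{shiffac2} and \eqref{shiffac1}, and your condition on the lower parameters, are correct details that the paper leaves implicit.
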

	
	\begin{proof}
		By the Lagrange representation of divided differences \eqref{prop2}, we have
		\[
		[X_0,X_1,\ldots,X_m]\,
		\prod_{i=1}^{n}(y-B_i)\prod_{j=1}^{l}(y-D_j)\prod_{t=1}^{\gamma}(y-S_t)
		=
		\sum_{k=0}^{m}
		\frac{
			\displaystyle
			\prod_{i=1}^{n}(X_k-B_i)\prod_{j=1}^{l}(X_k-D_j)\prod_{t=1}^{\gamma}(X_k-S_t)
		}{
			\displaystyle
			\prod_{\substack{i=0\\ i\neq k}}^{m}(X_k-X_i)
		}.
		\]
		For the choice $X_k=a+k$, the numerator and denominator products in the summand
		are evaluated explicitly by \eqref{prod1}--\eqref{prod4}. Substituting these closed
		forms into the above sum and collecting the $k$-independent factors yield
		\[
		[X_0,\ldots,X_m]\,
		\prod_{i=1}^{n}(y-B_i)\prod_{j=1}^{l}(y-D_j)\prod_{t=1}^{\gamma}(y-S_t)
		=
		\frac{(-1)^{n+m+l+\gamma}(1-b)_n(1-d)_l(1-s)_\gamma}{m!}
		\sum_{k=0}^{m}
		\frac{(-m)_k(b)_k(d)_k(s)_k}{k!(b-n)_k(d-l)_k(s-\gamma)_k}.
		\]
		By the definition of the terminating hypergeometric series \eqref{hyper}, the
		finite sum is precisely
		\[
		{}_4F_3\!\left(\!\begin{matrix}-m,\,b,\,d,\,s\\ b-n,\,d-l,\,s-\gamma\end{matrix}\!\middle|\,1\right),
		\]
		which completes the proof.
	\end{proof}
	
	\begin{theorem}[A zero-balanced hypergeometric $_{4}\uppercase{F}_{3}$ series \cite{zero,zero2}]
		\begin{align}\label{zerob3}
			\pFq[4]{4}{3}{-m,b,d, s}{b-n,d-l, s-\gamma}{1}=\frac{m!}{(1-b)_{n}(1-d)_{l}(1-s)_{\gamma}}.
		\end{align}  
		where $n,m,l\in {{\mathbb{N}}_{0}}$,\, $\gamma=m-n-l$,\,  $m\ge n+l$,\,\,$b,d, s,b-n,d-l, s-\gamma\in \mathbb{C}\backslash \mathbb{Z}_{0}^{-}.$
	\end{theorem}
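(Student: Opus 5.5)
The plan is to mirror the proof of the ${}_3F_2$ case, now starting from Lemma \ref{lemma2}. Consider the polynomial
\[
Q(y):=\prod_{i=1}^{n}(y-B_i)\prod_{j=1}^{l}(y-D_j)\prod_{t=1}^{\gamma}(y-S_t),
\]
where $B_i=a-b+i$, $D_j=a-d+j$ and $S_t=a-s+t$. It is a product of $n+l+\gamma$ monic linear factors, so it is monic of degree $n+l+\gamma$. Taking the $m$th divided difference at the equally spaced nodes $X_k=a+k$, $k=0,\ldots,m$, and applying the polynomial property from Section~\ref{subsec1} gives
\[
[X_0,X_1,\ldots,X_m]\,Q(y)=\delta_{m,n+l+\gamma}
\]
whenever $n+l+\gamma\le m$.

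The next step is to compare this with the hypergeometric evaluation of the same divided difference in Lemma \ref{lemma2}. This yields
\[
(-1)^{n+m+l+\gamma}\frac{(1-b)_n(1-d)_l(1-s)_\gamma}{m!}\,
\pFq[4]{4}{3}{-m,b,d,s}{b-n,d-l,s-\gamma}{1}=\delta_{m,n+l+\gamma}.
\]
Now specialize to $\gamma=m-n-l$. This is a nonnegative integer because $m\ge n+l$, so $Q$ and Lemma \ref{lemma2} make sense, and the Kronecker delta equals $1$. The sign is $(-1)^{n+m+l+\gamma}=(-1)^{2m}=1$. Dividing by the prefactor then gives exactly (\ref{zerob3}).

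The only delicate points are the legitimacy of this division and the zero-balance condition.
\begin{itemize}
\item The division requires $(1-b)_n(1-d)_l(1-s)_\gamma\neq0$. This holds under the hypotheses: if $(1-b)_n=0$, then $b\in\{1,\ldots,n\}$, so $b-n\in\mathbb{Z}_0^-$, which is excluded. The same argument applies to $d$ and $s$.
\item These hypotheses also ensure that the lower parameters $b-n$, $d-l$, $s-\gamma$ do not make the series undefined, and that the closed forms (\ref{prod2})--(\ref{prod4}) used in Lemma \ref{lemma2} are valid.
\item The series is zero-balanced: $(b-n)+(d-l)+(s-\gamma)-(-m+b+d+s)=m-(n+l+\gamma)=0$. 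So the degree-matching condition is precisely the zero-balance condition.
\end{itemize}
No step appears to be a real obstacle; the bookkeeping of the sign and the nonvanishing of the prefactor is the main thing to check.
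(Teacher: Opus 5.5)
Your proposal is correct and follows essentially the same route as the paper: combine Lemma~\ref{lemma2} with the fact that the $m$th divided difference of a monic polynomial of degree $n+l+\gamma\le m$ equals $\delta_{m,n+l+\gamma}$, then set $\gamma=m-n-l$. You also check explicitly that the sign is $(-1)^{2m}=1$ and that $(1-b)_n(1-d)_l(1-s)_\gamma\neq 0$ follows from $b-n,\,d-l,\,s-\gamma\notin\mathbb{Z}_0^-$, which supplies details the paper leaves implicit.
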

	
	\begin{proof} From Lemma \ref{lemma2}, the following closed formula is obtained for  $0\leq n+l+\gamma\leq m$:
		\begin{align}
			\pFq[4]{4}{3}{-m,b,d, s}{b-n,d-l, s-\gamma}{1}=\delta_{m,n+l+\gamma}\,\frac{m!}{(1-b)_{n}(1-d)_{l}(1-s)_{\gamma}}.\label{zerob2}
		\end{align}
		Setting $\gamma=m-n-l$ and using (\ref{zerob2}), we arrive at formula (\ref{zerob3}). \end{proof} 
	The preceding ${}_3F_2(1)$ and ${}_4F_3(1)$ cases reveal the general 
	pattern of the construction. We now replace the individual parameter 
	blocks by $r$ families and derive the corresponding 
	${}_{r+1}F_r(1)$ representation.
	\begin{lemma}\label{lemma3}
		For the sequences  ${{X}_{k}}=a+k$,  $A_{{{i}_{1}}}^{1}=a-{{\beta }_{1}}+{{i}_{1}},$ $A_{{{i}_{2}}}^{2}=a-{{\beta }_{2}}+{{i}_{2}},\ldots ,$ $A_{{{i}_{r}}}^{r}=a-{{\beta }_{r}}+{{i}_{r}},$  where $k=0,1,\ldots ,m$ and ${{i}_{j}}=1,\ldots ,{{k}_{j}}$ for $j=1,\ldots ,r,$ the following divided difference can be expressed in terms of a hypergeometric series:
		
		\begin{align*}
			& [{{X}_{0}},{{X}_{1}},\ldots ,{{X}_{m}}]\prod\limits_{{{i}_{1}}=1}^{{{k}_{1}}}{(y-A_{{{i}_{1}}}^{1})}\prod\limits_{{{i}_{2}}=1}^{{{k}_{2}}}{(y-A_{{{i}_{2}}}^{2})}\ldots \prod\limits_{{{i}_{r}}=1}^{{{k}_{r}}}{(y-A_{{{i}_{r}}}^{r})} \\ 
			&={{(-1)}^{m+{{k}_{1}}+{{k}_{2}}+\ldots +{{k}_{r}}}}\frac{{{(1-{{\beta }_{1}})}_{{{k}_{1}}}}{{(1-{{\beta }_{2}})}_{{{k}_{2}}}}\ldots {{(1-{{\beta }_{r}})}_{{{k}_{r}}}}}{m!}\, \pFq[4]{r+1}{r}{-m,{\beta }_{1},{\beta }_{2},\ldots, {\beta }_{r}}{{\beta }_{1}-{k}_{1}, {\beta }_{2}-{k}_{2},\ldots,{\beta }_{r}-{k}_{r}}{1}.
		\end{align*}
	\end{lemma}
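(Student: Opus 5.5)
The proof follows the pattern of Lemmas \ref{lemma1} and \ref{lemma2}, with the explicit blocks replaced by $r$ generic ones. The nodes $X_0,\ldots,X_m$ are distinct. By the Lagrange representation \eqref{prop2}, the divided difference of the polynomial
\[
P(y)=\prod_{j=1}^{r}\prod_{i_j=1}^{k_j}(y-A^{j}_{i_j})
\]
therefore equals
\[
\sum_{k=0}^{m}\frac{\prod_{j=1}^{r}\prod_{i_j=1}^{k_j}(X_k-A^{j}_{i_j})}{\prod_{i\neq k}(X_k-X_i)}.
\]

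The denominator is handled exactly as before, by \eqref{prod1}: it contributes $\frac{(-m)_k}{k!}\frac{(-1)^m}{m!}$. Note that \eqref{prod1} gives the reciprocal product directly.

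For each block $j$, the same computation as in \eqref{prod2} applies with $b$ replaced by $\beta_j$ and $n$ by $k_j$:
\[
\prod_{i_j=1}^{k_j}(X_k-A^j_{i_j})=\prod_{i=1}^{k_j}(\beta_j+k-i)=\frac{(-1)^{k_j}(1-\beta_j)_{k_j}(\beta_j)_k}{(\beta_j-k_j)_k}.
\]
To justify this generic version of \eqref{prod2}, I would write
\[
\prod_{i=1}^{k_j}(\beta_j+k-i)=(\beta_j+k-k_j)_{k_j}=\frac{(\beta_j-k_j)_{k+k_j}}{(\beta_j-k_j)_k},
\]
using \eqref{shiffac2}. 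Applying \eqref{shiffac2} again in the other order gives
\[
(\beta_j-k_j)_{k+k_j}=(\beta_j-k_j)_{k_j}(\beta_j)_k,
\]
and \eqref{shiffac1} turns $(\beta_j-k_j)_{k_j}$ into $(-1)^{k_j}(1-\beta_j)_{k_j}$. This requires $(\beta_j-k_j)_k\neq0$, which is the standing assumption that the lower parameters are not nonpositive integers.

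Substituting these closed forms and pulling out the $k$-independent factor
\[
\frac{(-1)^{m+k_1+\cdots+k_r}\prod_j(1-\beta_j)_{k_j}}{m!}
\]
leaves the sum
\[
\sum_{k=0}^{m}\frac{(-m)_k(\beta_1)_k\cdots(\beta_r)_k}{k!\,(\beta_1-k_1)_k\cdots(\beta_r-k_r)_k}.
\]
By \eqref{hyper} with $z=1$, this is the stated terminating ${}_{r+1}F_r(1)$ series.

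There is no real obstacle here. The only point needing care is the sign and factorial bookkeeping in the generic block product, which is settled by the \eqref{shiffac1}--\eqref{shiffac2} manipulation above. A formal induction on $r$ is unnecessary, since the product over blocks factorizes termwise in $k$.
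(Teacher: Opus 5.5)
Your proof is correct and follows the paper's argument step for step: the Lagrange representation, the reciprocal node product \eqref{prod1}, the per-block product formula (which the paper states as \eqref{prod66}), and factoring out the $k$-independent constants to recognize the terminating ${}_{r+1}F_r(1)$. Your derivation of the block product from \eqref{shiffac1}--\eqref{shiffac2} supplies the one step the paper asserts without justification.
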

	
	\begin{proof}For any $j>0$, we get
		\begin{align}\label{prod66}
			\prod\limits_{{{i}_{j}}=1}^{{{k}_{j}}}(X_{k}-A_{i_{j}}^{j})=(-1)^{k_{j}}\frac{{{(1-{{\beta }_{j}})}_{{{k}_{j}}}}{{({{\beta }_{j}})}_{k}}}{(\beta _{j}-k_{j})_{k}}.
		\end{align}
		Here, we follow an approach similar to that used in the proof of Lemma \ref{lemma2}. Using property (\ref{prop2}) together with equation (\ref{prod1}) and products (\ref{prod66}) for $j=1, 2, \ldots, r$, we obtain 	
		
		\begin{align*}
			& [{{X}_{0}},{{X}_{1}},\ldots ,{{X}_{m}}]\prod\limits_{{{i}_{1}}=1}^{{{k}_{1}}}{(y-A_{{{i}_{1}}}^{1})}\prod\limits_{{{i}_{2}}=1}^{{{k}_{2}}}{(y-A_{{{i}_{2}}}^{2})}\ldots \prod\limits_{{{i}_{r}}=1}^{{{k}_{r}}}{(y-A_{{{i}_{r}}}^{r})} \\&=\sum_{k=0}^{m}\frac{\prod\limits_{{{i}_{1}}=1}^{{{k}_{1}}}{(X_{k}-A_{{{i}_{1}}}^{1})}\prod\limits_{{{i}_{2}}=1}^{{{k}_{2}}}{(X_{k}-A_{{{i}_{2}}}^{2})}\ldots \prod\limits_{{{i}_{r}}=1}^{{{k}_{r}}}{(X_{k}-A_{{{i}_{r}}}^{r})}}{\prod_{i\neq k}{(X_{k}-X_{i})}}\\
			& =\sum\limits_{k=0}^{m}{{{(-1)}^{{{k}_{1}}}}}\frac{{{(1-{{\beta }_{1}})}_{{{k}_{1}}}}{{({{\beta }_{1}})}_{k}}}{{{({{\beta }_{1}}-{{k}_{1}})}_{k}}}{{(-1)}^{{{k}_{2}}}}\frac{{{(1-{{\beta }_{2}})}_{{{k}_{2}}}}{{({{\beta }_{2}})}_{k}}}{{{({{\beta }_{2}}-{{k}_{2}})}_{k}}}\ldots {{(-1)}^{{{k}_{r}}}}\frac{{{(1-{{\beta }_{r}})}_{{{k}_{r}}}}{{({{\beta }_{r}})}_{k}}}{{{({{\beta }_{r}}-{{k}_{r}})}_{k}}}{{(-1)}^{m}}\frac{{{(-m)}_{k}}}{m!\,k!} \\ 
			& = {{(-1)}^{m+{{k}_{1}}+{{k}_{2}}+\ldots +{{k}_{r}}}}\frac{{{(1-{{\beta }_{1}})}_{{{k}_{1}}}}{{(1-{{\beta }_{2}})}_{{{k}_{2}}}}\ldots {{(1-{{\beta }_{r}})}_{{{k}_{r}}}}}{m!}\sum\limits_{k=0}^{m}{\frac{{{({{\beta }_{1}})}_{k}}{{({{\beta }_{2}})}_{k}}\ldots {{({{\beta }_{r}})}_{k}}}{{{({{\beta }_{1}}-{{k}_{1}})}_{k}}{{({{\beta }_{2}}-{{k}_{2}})}_{k}}\ldots {{({{\beta }_{r}}-{{k}_{r}})}_{k}}}}\frac{{{(-m)}_{k}}}{k!}.
		\end{align*}
		Invoking the definition of hypergeometric series (\ref{hyper}), we reach the result.			
	\end{proof}
	As in the lower-order cases, the general zero-balanced summation is 
	obtained by matching the total degree of the polynomial with the order 
	of the divided difference.
	\begin{theorem}[Generalization of the Zero-Balanced Series \cite{zero}]
		\begin{align}\nonumber
			&\pFq[4]{r+1}{r}{-m,{\beta }_{1},{\beta }_{2},\ldots, {\beta }_{r}}{{\beta }_{1}-{k}_{1}, {\beta }_{2}-{k}_{2},\ldots,{\beta }_{r}-\left(m-{{k}_{1}}-{{k}_{2}}-\ldots -{{k}_{r-1}}\right)}{1}\\& \label{gzero2}= \frac{m!} {{{(1-{{\beta }_{1}})}_{{{k}_{1}}}}{{(1-{{\beta }_{2}})}_{{{k}_{2}}}}\ldots {{(1-{{\beta }_{r}})}_{{m-{{k}_{1}}-{{k}_{2}}-\ldots -{{k}_{r-1}}}}}},
		\end{align}  
		where
		$m,{{k}_{1}},{{k}_{2}},\ldots ,{{k}_{r-1}}\in {{\mathbb{N}}_{0}}$,$m\ge {{k}_{1}}+{{k}_{2}}+\ldots +{{k}_{r-1}}$, the remaining numerator and denominator parameters being neither zero nor negative integers.
	\end{theorem}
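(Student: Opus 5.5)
We derive this directly from Lemma \ref{lemma3} by the same degree argument used for the ${}_3F_2(1)$ and ${}_4F_3(1)$ cases. The polynomial
\[
P(y):=\prod_{i_1=1}^{k_1}(y-A^1_{i_1})\prod_{i_2=1}^{k_2}(y-A^2_{i_2})\cdots\prod_{i_r=1}^{k_r}(y-A^r_{i_r})
\]
is a product of monic linear factors, so it is monic of degree $k_1+k_2+\cdots+k_r$. Whenever $0\le k_1+\cdots+k_r\le m$, the property of divided differences of polynomials gives
\[
[X_0,X_1,\ldots,X_m]\,P(y)=\delta_{m,\,k_1+\cdots+k_r}.
\]

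Comparing this with Lemma \ref{lemma3} yields the general closed form
\[
\pFq[4]{r+1}{r}{-m,\beta_1,\ldots,\beta_r}{\beta_1-k_1,\ldots,\beta_r-k_r}{1}
=\delta_{m,\,k_1+\cdots+k_r}\,\frac{(-1)^{m+k_1+\cdots+k_r}\,m!}{(1-\beta_1)_{k_1}\cdots(1-\beta_r)_{k_r}}.
\]
Here we divide by $(1-\beta_1)_{k_1}\cdots(1-\beta_r)_{k_r}$, so this step needs $1-\beta_j\notin\{0,-1,\ldots,-(k_j-1)\}$ for each $j$. That is, $\beta_j\notin\{1,\ldots,k_j\}$.

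Next choose $k_r=m-k_1-\cdots-k_{r-1}$. The hypothesis $m\ge k_1+\cdots+k_{r-1}$ makes $k_r\in\mathbb{N}_0$, so $\sum_j k_j=m$. The Kronecker delta then equals $1$, and the sign is $(-1)^{2m}=1$. This leaves exactly the right-hand side of \eqref{gzero2}. The resulting series is zero-balanced because
\[
\sum_j(\beta_j-k_j)-\Bigl(-m+\sum_j\beta_j\Bigr)=m-\sum_j k_j=0.
\]

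The main point of care is the parameter restrictions, not the algebra.

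\begin{itemize}
\item The closed forms \eqref{prod66} require the denominators $(\beta_j-k_j)_k$, for $k\le m$, to be nonzero. This is exactly the stated hypothesis that the lower parameters are not zero or negative integers.
\item Nonvanishing of $(1-\beta_j)_{k_j}$ is needed for the division above. Note that $(1-\beta_j)_{k_j}=0$ would force $\beta_j\in\{1,\ldots,k_j\}$, and then $\beta_j-k_j$ is a nonpositive integer. That is already excluded by the hypothesis on the lower parameters, so no extra condition is required.
\end{itemize}

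With these conditions checked, the result follows immediately.
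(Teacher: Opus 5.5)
Your proposal is correct and follows essentially the same route as the paper: apply Lemma~\ref{lemma3}, use the monic-polynomial property $[X_0,\ldots,X_m]P=\delta_{m,k_1+\cdots+k_r}$ to get the general closed form, then set $k_r=m-k_1-\cdots-k_{r-1}$. Your explicit sign $(-1)^{m+k_1+\cdots+k_r}$ (which the paper drops because it equals $1$ whenever the delta is nonzero) and your check that $(1-\beta_j)_{k_j}\neq0$ follows from the hypotheses on the lower parameters make the argument more careful, but they do not change the approach.
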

	
	\begin{proof} By Lemma \ref{lemma3}, as a particular case, the following closed formula is obtained for $0\leq {{k}_{1}}+{{k}_{2}}+\ldots +{{k}_{r}}\leq m $
		\begin{align}\nonumber
			&\pFq[4]{r+1}{r}{-m,{\beta }_{1},{\beta}_{2},\ldots, {\beta }_{r}}{{\beta }_{1}-{k}_{1}, {\beta }_{2}-{k}_{2},\ldots,{\beta }_{r}-{k}_{r}}{1}\\& \label{gzero1}= \delta_{m,{{k}_{1}}+{{k}_{2}}+\ldots +{{k}_{r}}}\frac{m!} {{{(1-{{\beta }_{1}})}_{{{k}_{1}}}}{{(1-{{\beta }_{2}})}_{{{k}_{2}}}}\ldots {{(1-{{\beta }_{r}})}_{{{k}_{r}}}}},
		\end{align}
		Setting  ${{k}_{r}}=m-{{k}_{1}}-{{k}_{2}}-\ldots -{{k}_{r-1}}$ and using (\ref{gzero1}), we arrive at formula (\ref{gzero2}). \end{proof} 
	
	The results obtained so far are summation formulas arising from the 
	evaluation of polynomial divided differences. We now use the same 
	divided-difference framework in a different way. Instead of reducing 
	the divided difference through a degree argument, we evaluate a product 
	in two different forms: directly by the Lagrange representation and by 
	the Leibniz rule for divided differences. Comparing these two evaluations 
	leads to a finite convolution transformation for terminating 
	hypergeometric series. The transformation itself is not restricted to 
	the zero-balanced case; the zero-balanced specialization will be 
	identified subsequently.
	\begin{theorem}
		\label{thm:finite-convolution-transformation}
		Let	$
		r,l\in\mathbb{N},
		\,\,
		m,n_1,\ldots,n_r\in\mathbb{N}_0,
		$
		and let
		$
		b_1,\ldots,b_r,d\in\mathbb{C}.
		$
		Assume that
		\begin{equation}
			\label{eq:parameter-condition-d}
			(1-d)_{m+l}\neq 0
		\end{equation}
		and
		\begin{equation}
			\label{eq:parameter-condition-b}
			(b_q-n_q)_m\neq 0,
			\qquad q=1,\ldots,r.
		\end{equation}
		Then
		\begin{equation}
			\label{eq:generalized-transformation}
			\begin{aligned}
				{}_{r+2}F_{r+1}\!\left[
				\begin{matrix}
					-m,\;b_1,\ldots,b_r,\;1-d\\
					b_1-n_1,\ldots,b_r-n_r,\;1-d+l
				\end{matrix};1
				\right]&=
				\frac{(1-d)_l}{(1-d)_{m+l}}
				\sum_{j=0}^{m}
				\binom{m}{j}
				(l)_{m-j}(1-d)_j
				\\
				&\qquad\times
				{}_{r+1}F_r\!\left[
				\begin{matrix}
					-j,\;b_1,\ldots,b_r\\
					b_1-n_1,\ldots,b_r-n_r
				\end{matrix};1
				\right].
			\end{aligned}
		\end{equation}
	\end{theorem}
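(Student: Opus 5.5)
The plan is to evaluate one divided difference of a product in two ways, as announced before the statement. Take the equally spaced nodes $X_k=a+k$, $k=0,\ldots,m$. Let
\[
f(y)=\prod_{q=1}^{r}\prod_{i_q=1}^{n_q}(y-A^{q}_{i_q}),\qquad A^{q}_{i_q}=a-b_q+i_q ,
\]
be the polynomial of Lemma \ref{lemma3}, with $\beta_q=b_q$ and $k_q=n_q$. The extra parameter pair $1-d$ over $1-d+l$ will come from a rational factor. The key observation is
\[
\frac{(1-d)_k}{(1-d+l)_k}=\frac{(1-d)_l}{(1-d+k)_l}.
\]
This suggests taking
\[
g(y)=\prod_{i=0}^{l-1}\frac{1}{y-C_i},\qquad C_i=a+d-1-i,
\]
so that $g(X_k)=1/(1-d+k)_l$. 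The hypothesis $(1-d)_{m+l}\neq0$ says exactly that no node $X_0,\ldots,X_m$ is a pole of $g$. Hence $[X_0,\ldots,X_m](fg)$ is well defined and the Lagrange formula \eqref{prop2} applies.

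\emph{First evaluation (Lagrange).} By \eqref{prop2}, \eqref{prod1} and \eqref{prod66}, exactly as in the proof of Lemma \ref{lemma3} but with the extra factor $g(X_k)=\frac{1}{(1-d)_l}\frac{(1-d)_k}{(1-d+l)_k}$ in the summand, we obtain
\[
[X_0,\ldots,X_m](fg)=\frac{(-1)^{m+\sum n_q}\prod_q(1-b_q)_{n_q}}{m!\,(1-d)_l}\;{}_{r+2}F_{r+1}\!\left[\begin{matrix}-m,b_1,\ldots,b_r,1-d\\ b_1-n_1,\ldots,b_r-n_r,1-d+l\end{matrix};1\right].
\]
The hypothesis $(b_q-n_q)_m\neq0$ keeps the denominators in this series nonzero.

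\emph{Second evaluation (Leibniz).} Use the Leibniz rule
\[
[X_0,\ldots,X_m](fg)=\sum_{j=0}^{m}[X_0,\ldots,X_j]f\cdot[X_j,\ldots,X_m]g .
\]
The first factor is given directly by Lemma \ref{lemma3} with $m$ replaced by $j$:
\[
[X_0,\ldots,X_j]f=\frac{(-1)^{j+\sum n_q}\prod_q(1-b_q)_{n_q}}{j!}\,{}_{r+1}F_r[-j,\ldots;1].
\]
The second factor is the main computation. Apply \eqref{prop2} on the shifted nodes $X_{j+p}$, $p=0,\ldots,N$ with $N=m-j$. Using \eqref{prod1} with $m$ replaced by $N$, this gives
\[
[X_j,\ldots,X_m]g=\frac{(-1)^N}{N!}\sum_{p=0}^{N}\frac{(-N)_p}{p!}\frac{1}{(1-d+j+p)_l}=\frac{(-1)^N}{N!\,(1-d+j)_l}\,{}_2F_1\!\left[\begin{matrix}-N,\,1-d+j\\ 1-d+j+l\end{matrix};1\right].
\]
Chu--Vandermonde then gives the closed form
\[
[X_j,\ldots,X_m]g=\frac{(-1)^{N}(l)_{N}}{N!\,(1-d+j)_{l+N}} .
\]
As a check, for $l=1$ this reduces to the standard formula $\prod_{k=j}^{m}(C-X_k)^{-1}$, with the appropriate sign. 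Alternatively, one could induct on $l$ via partial fractions, but the Vandermonde route is more direct.

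\emph{Matching.} Equate the two evaluations and cancel the common factor $(-1)^{\sum n_q}\prod_q(1-b_q)_{n_q}$. The signs combine to $(-1)^{j+(m-j)}=(-1)^m$, which matches the Lagrange side. Multiply through by $m!$; the factor $\frac{m!}{j!(m-j)!}$ becomes $\binom{m}{j}$. For the remaining ratio, write $(1-d+j)_{l+m-j}=(1-d)_{m+l}/(1-d)_j$ using \eqref{shiffac2}. Together with the prefactor $(1-d)_l$ from the left side, this produces exactly $\frac{(1-d)_l}{(1-d)_{m+l}}\binom{m}{j}(l)_{m-j}(1-d)_j$, which gives \eqref{eq:generalized-transformation}.

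I expect the main obstacle to be the closed evaluation of $[X_j,\ldots,X_m]g$, in particular getting the Vandermonde step and the sign and shift bookkeeping right. The other delicate point is justifying the Leibniz rule for a rational $g$. This is legitimate because the Leibniz rule holds for arbitrary functions at distinct nodes, and the nodes avoid the poles of $g$ by \eqref{eq:parameter-condition-d}.
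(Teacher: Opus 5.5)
Your argument is essentially the paper's proof. You use the same polynomial $f$ and the same rational factor $g$: your poles $C_i=a+d-1-i$ reproduce exactly the paper's $G(y)=1/\prod_{t=1}^{l}(y-a-d+t)$. The steps also match: the Lagrange evaluation giving the ${}_{r+2}F_{r+1}$, the Leibniz rule with Lemma~\ref{lemma3} for $[X_0,\ldots,X_j]f$, Chu--Vandermonde giving $[X_j,\ldots,X_m]g=(-1)^{m-j}(l)_{m-j}/\bigl((m-j)!\,(1-d+j)_{m+l-j}\bigr)$, and the matching via $(1-d)_j(1-d+j)_{m+l-j}=(1-d)_{m+l}$.

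The one step you gloss over is the cancellation of $\prod_q(1-b_q)_{n_q}$. This factor can vanish under the stated hypotheses. For example, take $b_q=1$, $n_q=5$, $m=3$: then $(b_q-n_q)_m=(-4)_3\neq0$, yet $(1-b_q)_{n_q}=0$. In that case both evaluations are identically $0$ and say nothing about the two series.

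As the paper does, first prove the identity under the extra assumption $\prod_q(1-b_q)_{n_q}\neq0$. Then extend it to the remaining admissible $b_q$ by continuity. This works because both sides are rational in $b_1,\ldots,b_r$, and their denominators $(b_q-n_q)_k$, $k\le m$, stay nonzero near such points.
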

	
		\begin{proof}
			Let
			\[
			X_k=a+k, \qquad k=0,1,\ldots,m,
			\]
			and define
			\[
			P(y)
			=
			\prod_{q=1}^{r}\prod_{i=1}^{n_q}
			\bigl(y-a+b_q-i\bigr)
			\]
			and
			\[
			G(y)
			=
			\frac{1}
			{\displaystyle\prod_{t=1}^{l}
				\bigl(y-a-d+t\bigr)}.
			\]
			Set
			\[
			R(y)=P(y)G(y).
			\]
			We evaluate the divided difference
			\[
			[X_0,X_1,\ldots,X_m]R
			\]
			in two different ways.
			We first evaluate the divided difference directly by the Lagrange
			representation:
			\begin{align}\label{lagrangerep}
			[X_0,\ldots,X_m]R
			=
			\sum_{k=0}^{m}
			\frac{R(X_k)}
			{\displaystyle\prod_{\substack{s=0\\ s\neq k}}^{m}
				(X_k-X_s)}.
		\end{align}
			For each $q=1,\ldots,r$, the product formula \eqref{prod66}, with
			$\beta_j=b_q$ and $k_j=n_q$, gives
			\[
			\prod_{i=1}^{n_q}
			(X_k-a+b_q-i)
			=
			(-1)^{n_q}(1-b_q)_{n_q}
			\frac{(b_q)_k}{(b_q-n_q)_k}.
			\]
			Set
			\[
			N=n_1+\cdots+n_r.
			\]
			Multiplying over $q=1,\ldots,r$, we obtain
			\begin{equation}
				P(X_k)
				=
				(-1)^N
				\prod_{q=1}^{r}(1-b_q)_{n_q}
				\prod_{q=1}^{r}
				\frac{(b_q)_k}{(b_q-n_q)_k}.
				\label{eq:PXk}
			\end{equation}
			Next,
			\[
			G(X_k)
			=
			\frac{1}{(1-d+k)_l}.
			\]
			Using \eqref{shiffac2}, we obtain
			\begin{equation}
				G(X_k)
				=
				\frac{1}{(1-d)_l}
				\frac{(1-d)_k}{(1-d+l)_k}.
				\label{eq:GXk}
			\end{equation}
			Moreover, 	it remains to calculate the denominator in the Lagrange
			representation. By \eqref{prod1},
			\begin{equation}
				\frac{1}
				{\displaystyle\prod_{\substack{s=0\\s\neq k}}^{m}
					(X_k-X_s)}
				=
				\frac{(-1)^m}{m!}
				\frac{(-m)_k}{k!}.
				\label{eq:Lagrange-coeff}
			\end{equation}
			
			Substituting \eqref{eq:PXk}, \eqref{eq:GXk}, and
			\eqref{eq:Lagrange-coeff} into the Lagrange representation \eqref{lagrangerep} yields
			\[
			[X_0,\ldots,X_m]R
			=
			\frac{(-1)^{N+m}}
			{m!(1-d)_l}
			\prod_{q=1}^{r}(1-b_q)_{n_q}
			\sum_{k=0}^{m}
			\frac{
				(-m)_k(b_1)_k\cdots(b_r)_k(1-d)_k
			}{
				(b_1-n_1)_k\cdots(b_r-n_r)_k
				(1-d+l)_k
			}
			\frac{1}{k!}.
			\]
			Therefore,
			\begin{equation}
				\begin{aligned}
					[X_0,\ldots,X_m]R
					&=
					\frac{(-1)^{N+m}}
					{m!(1-d)_l}
					\prod_{q=1}^{r}(1-b_q)_{n_q}
					\\
					&\quad\times
					{}_{r+2}F_{r+1}
					\left[
					\begin{matrix}
						-m,b_1,\ldots,b_r,1-d\\
						b_1-n_1,\ldots,b_r-n_r,1-d+l
					\end{matrix}
					;1
					\right].
				\end{aligned}
				\label{eq:first-evaluation}
			\end{equation}
			We next evaluate the same divided difference using the Leibniz rule.
			Since $R=PG$, we have
			\begin{equation}
				\label{eq:leibniz-divided-differences}
				[X_0,\ldots,X_m](PG)
				=
				\sum_{j=0}^{m}
				[X_0,\ldots,X_j]P\,
				[X_j,\ldots,X_m]G.
			\end{equation}
			
			We first evaluate $[X_0,\ldots,X_j]P$. Applying Lemma~\ref{lemma3} with
			$m$ replaced by $j$, $\beta_q=b_q$, and $k_q=n_q$, we obtain
			\begin{equation}
				\label{eq:P-divided-difference}
				\begin{aligned}
					[X_0,\ldots,X_j]P
					&=
					\frac{(-1)^{N+j}}{j!}
					\prod_{q=1}^{r}(1-b_q)_{n_q}
					\\
					&\quad\times
					{}_{r+1}F_r
					\left[
					\begin{matrix}
						-j,b_1,\ldots,b_r\\
						b_1-n_1,\ldots,b_r-n_r
					\end{matrix}
					;1
					\right],
				\end{aligned}
			\end{equation}
			where
			\[
			N=n_1+\cdots+n_r.
			\]
			It remains to evaluate $[X_j,\ldots,X_m]G$. Set
			\[
			h=m-j.
			\]
			By the Lagrange representation,
			\[
			[X_j,\ldots,X_m]G
			=
			\sum_{u=0}^{h}
			\frac{G(X_{j+u})}
			{\displaystyle
				\prod_{\substack{v=0\\v\neq u}}^{h}
				(X_{j+u}-X_{j+v})}.
			\]
			Since $X_{j+u}=a+j+u$, we have
			\[
			G(X_{j+u})
			=
			\frac{1}{(1-d+j+u)_l}.
			\]
			Using \eqref{shiffac2}, this becomes
			\begin{equation}
				\label{eq:G-node-value}
				G(X_{j+u})
				=
				\frac{1}{(1-d+j)_l}
				\frac{(1-d+j)_u}
				{(1-d+j+l)_u}.
			\end{equation}
			Moreover, applying \eqref{prod1} to the $h+1$ equally spaced nodes
			$X_j,\ldots,X_m$, we obtain
			\begin{equation}
				\label{eq:shifted-lagrange-coefficient}
				\frac{1}
				{\displaystyle
					\prod_{\substack{v=0\\v\neq u}}^{h}
					(X_{j+u}-X_{j+v})}
				=
				\frac{(-1)^h}{h!}
				\frac{(-h)_u}{u!}.
			\end{equation}
			Substituting \eqref{eq:G-node-value} and
			\eqref{eq:shifted-lagrange-coefficient} gives
			\[
			[X_j,\ldots,X_m]G
			=
			\frac{(-1)^h}
			{h!(1-d+j)_l}
			\sum_{u=0}^{h}
			\frac{(-h)_u(1-d+j)_u}
			{(1-d+j+l)_u}
			\frac{1}{u!}.
			\]
			Hence
			\[
			[X_j,\ldots,X_m]G
			=
			\frac{(-1)^h}
			{h!(1-d+j)_l}
			{}_2F_1
			\left[
			\begin{matrix}
				-h,1-d+j\\
				1-d+j+l
			\end{matrix}
			;1
			\right].
			\]
			By the Chu--Vandermonde summation formula,
			\[
			{}_2F_1
			\left[
			\begin{matrix}
				-h,\alpha\\
				\gamma
			\end{matrix}
			;1
			\right]
			=
			\frac{(\gamma-\alpha)_h}{(\gamma)_h},
			\qquad h\in\mathbb N_0,
			\]
			and therefore
			\[
			{}_2F_1
			\left[
			\begin{matrix}
				-h,1-d+j\\
				1-d+j+l
			\end{matrix}
			;1
			\right]
			=
			\frac{(l)_h}{(1-d+j+l)_h}.
			\]
			Consequently,
			\[
			[X_j,\ldots,X_m]G
			=
			\frac{(-1)^h(l)_h}
			{h!(1-d+j)_l(1-d+j+l)_h}.
			\]
		By \eqref{shiffac2}, we obtain
			\begin{equation}
				\label{eq:G-divided-difference}
				[X_j,\ldots,X_m]G
				=
				\frac{(-1)^{m-j}(l)_{m-j}}
				{(m-j)!(1-d+j)_{m+l-j}}.
			\end{equation}
		Substituting \eqref{eq:P-divided-difference} and
			\eqref{eq:G-divided-difference} into
			\eqref{eq:leibniz-divided-differences}, and using
			\[
			(-1)^{N+j}(-1)^{m-j}
			=
			(-1)^{N+m},
			\]
			we obtain
			\begin{equation}
				\label{eq:second-evaluation-pre}
				\begin{aligned}
					[X_0,\ldots,X_m]R
					&=
					(-1)^{N+m}
					\prod_{q=1}^{r}(1-b_q)_{n_q}
					\\
					&\quad\times
					\sum_{j=0}^{m}
					\frac{(l)_{m-j}}
					{j!(m-j)!(1-d+j)_{m+l-j}}
					\\
					&\quad\times
					{}_{r+1}F_r
					\left[
					\begin{matrix}
						-j,b_1,\ldots,b_r\\
						b_1-n_1,\ldots,b_r-n_r
					\end{matrix}
					;1
					\right].
				\end{aligned}
			\end{equation}
			To simplify the $j$-dependent denominator, observe that
			\[
			(1-d)_j(1-d+j)_{m+l-j}
			=
			(1-d)_{m+l}.
			\]
			Hence
			\[
			\frac{1}{(1-d+j)_{m+l-j}}
			=
			\frac{(1-d)_j}{(1-d)_{m+l}}.
			\]
			Using also
			\[
			\frac{1}{j!(m-j)!}
			=
			\frac{1}{m!}\binom{m}{j},
			\]
			equation \eqref{eq:second-evaluation-pre} becomes
			\begin{equation}
				\label{eq:second-evaluation}
				\begin{aligned}
					[X_0,\ldots,X_m]R
					&=
					\frac{(-1)^{N+m}}
					{m!(1-d)_{m+l}}
					\prod_{q=1}^{r}(1-b_q)_{n_q}
					\\
					&\quad\times
					\sum_{j=0}^{m}
					\binom{m}{j}
					(l)_{m-j}(1-d)_j
					\\
					&\quad\times
					{}_{r+1}F_r
					\left[
					\begin{matrix}
						-j,b_1,\ldots,b_r\\
						b_1-n_1,\ldots,b_r-n_r
					\end{matrix}
					;1
					\right].
				\end{aligned}
			\end{equation}
			We now compare \eqref{eq:first-evaluation} and
			\eqref{eq:second-evaluation}. Initially assume, in addition, that
			\[
			\prod_{q=1}^{r}(1-b_q)_{n_q}\neq0.
			\]
			Cancelling the common nonzero factors gives
			\[
			\begin{aligned}
				&{}_{r+2}F_{r+1}
				\left[
				\begin{matrix}
					-m,b_1,\ldots,b_r,1-d\\
					b_1-n_1,\ldots,b_r-n_r,1-d+l
				\end{matrix}
				;1
				\right]
				\\
				&\qquad=
				\frac{(1-d)_l}{(1-d)_{m+l}}
				\sum_{j=0}^{m}
				\binom{m}{j}
				(l)_{m-j}(1-d)_j
				\\
				&\qquad\qquad\times
				{}_{r+1}F_r
				\left[
				\begin{matrix}
					-j,b_1,\ldots,b_r\\
					b_1-n_1,\ldots,b_r-n_r
				\end{matrix}
				;1
				\right],
			\end{aligned}
			\]
			which is precisely \eqref{eq:generalized-transformation}. The remaining admissible cases follow by continuity.
			
	\end{proof}
	Theorem 3.7 holds without imposing the zero-balance condition. To 
	connect this transformation with the zero-balanced series considered 
	in the preceding results, we next compute the parametric excess of the 
	hypergeometric series on the left-hand side of~(3.16).
	\begin{remark}
		\label{rem:zero-balanced-case}
		Let
		\[
		N=n_1+\cdots+n_r.
		\]
		The parametric excess of the terminating
		\({}_{r+2}F_{r+1}(1)\) series on the left-hand side of
		\eqref{eq:generalized-transformation} is
		\begin{align*}
			&\left[
			\sum_{q=1}^{r}(b_q-n_q)+(1-d+l)
			\right]
			-
			\left[
			-m+\sum_{q=1}^{r}b_q+(1-d)
			\right]
			\\
			&\qquad
			=
			m+l-\sum_{q=1}^{r}n_q
			=
			m+l-N.
		\end{align*}
		Consequently, the series is zero-balanced whenever
		\[
		m+l=n_1+\cdots+n_r.
		\]
		Under this condition,
		\eqref{eq:generalized-transformation} provides a finite convolution
		transformation for a terminating zero-balanced
		\({}_{r+2}F_{r+1}(1)\) series.
	\end{remark}
	\begin{corollary}
		\label{cor:zero-balanced-transformation}
		Under the assumptions of
		Theorem~\ref{thm:finite-convolution-transformation}, suppose additionally
		that
		\[
		m+l=n_1+\cdots+n_r.
		\]
		Then the terminating hypergeometric series on the left-hand side of
		\eqref{eq:generalized-transformation} is zero-balanced. Consequently,
		\eqref{eq:generalized-transformation} provides the finite transformation
		formula
		\begin{equation}
			\label{eq:zero-balanced-transformation}
			\begin{aligned}
				&{}_{r+2}F_{r+1}\!\left[
				\begin{matrix}
					-m,\;b_1,\ldots,b_r,\;1-d\\
					b_1-n_1,\ldots,b_r-n_r,\;1-d+l
				\end{matrix};1
				\right]
				\\[2mm]
				&\quad=
				\frac{(1-d)_l}{(1-d)_{m+l}}
				\sum_{j=0}^{m}
				\binom{m}{j}
				(l)_{m-j}(1-d)_j
				\\
				&\qquad\qquad\times
				{}_{r+1}F_r\!\left[
				\begin{matrix}
					-j,\;b_1,\ldots,b_r\\
					b_1-n_1,\ldots,b_r-n_r
				\end{matrix};1
				\right],
			\end{aligned}
		\end{equation}
		for a terminating zero-balanced
		\({}_{r+2}F_{r+1}(1)\) series.
	\end{corollary}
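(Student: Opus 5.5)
The corollary follows directly from Theorem~\ref{thm:finite-convolution-transformation} and the computation in Remark~\ref{rem:zero-balanced-case}. No new divided-difference argument is needed. The plan has two parts. First, show that the extra hypothesis $m+l=n_1+\cdots+n_r$ makes the left-hand series zero-balanced. Second, note that the identity \eqref{eq:zero-balanced-transformation} is literally \eqref{eq:generalized-transformation} under that hypothesis.

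For the first part, I would recompute the parametric excess as in Remark~\ref{rem:zero-balanced-case}. The sum of the denominator parameters is $\sum_{q}(b_q-n_q)+(1-d+l)$. The sum of the numerator parameters is $-m+\sum_q b_q+(1-d)$. The $b_q$ and the $1-d$ terms cancel, so the difference is $m+l-N$ with $N=n_1+\cdots+n_r$. Under the hypothesis $m+l=N$ this vanishes, which is exactly the zero-balance condition $\sum b_j-\sum a_i=0$ from Section~\ref{subsec3}.

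For the second part, the assumptions \eqref{eq:parameter-condition-d} and \eqref{eq:parameter-condition-b} are still in force. So Theorem~\ref{thm:finite-convolution-transformation} applies verbatim, and its conclusion \eqref{eq:generalized-transformation} is identical to \eqref{eq:zero-balanced-transformation}.

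The only step needing a little care is compatibility: the added constraint must not clash with the theorem's standing hypotheses. It does not, because $l\in\mathbb{N}$ and $m\in\mathbb{N}_0$ force $N=m+l\ge 1$, which is attainable with $n_q\in\mathbb{N}_0$. The conditions $(1-d)_{m+l}\neq 0$ and $(b_q-n_q)_m\neq 0$ involve only $d$ and the $b_q$, and these remain free parameters. Hence the set of admissible parameters is nonempty, and the specialization is a genuine instance of the theorem rather than a vacuous one.
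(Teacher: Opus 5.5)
Your proposal is correct and follows essentially the same route as the paper: the paper's proof cites the parametric-excess computation of Remark~\ref{rem:zero-balanced-case} (excess $m+l-N$) to get zero balance, then applies Theorem~\ref{thm:finite-convolution-transformation} verbatim. Your extra check that the admissible parameter set is nonempty is not in the paper; it is harmless but not needed for the proof.
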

	
	\begin{proof}
		By Remark~3.8, the condition
		\[
		m+l=n_1+\cdots+n_r
		\]
		makes the parametric excess of the series in~(3.16) equal to zero. 
		Hence the series is zero-balanced, and the result follows immediately 
		from Theorem~3.7.
	\end{proof}

\end{document}